\newtheorem{theorem}{Theorem}
\theoremstyle{plain}
\newtheorem{corollary}{Corollary}
\newtheorem{proposition}{Proposition}
\numberwithin{equation}{section}
\begin{document}
\title{Generalized Quasi Yamabe Gradient Solitons and Applications}
\author{S\.INEM G\"uler}
\address[S. G\"uler]{Department of Industrial Engineering,
Istanbul Sabahattin Zaim University, Halkal\.i, Istanbul, Turkey.}
\email{sinem.guler@izu.edu.tr}
\author{B\"ulent \"Unal}
\address[B. \"{U}nal]{Department of Mathematics, Bilkent University,
Bilkent, 06800 Ankara, Turkey}
\email{bulentunal@mail.com}
\subjclass[2010]{53C25, and 53C40.}
\keywords{Generalized quasi gradient Yamabe solitons, warped product manifolds,
generalized Robertson-Walker spacetimes, standard static spacetimes, Walker manifolds.}

\begin{abstract}
The purpose of this article is to study generalized quasi Yamabe gradient
solitons  on warped product manifolds. First, we obtain some necessary and
sufficient conditions for the existence of  generalized quasi Yamabe gradient
solitons  equipped with the warped product structure. Then we study three
important applications in the Lorentzian and the neutral  settings for the
particular class, called as gradient Yamabe soliton: We proved the existence
of the non-trivial gradient Yamabe soliton on generalized Robertson-Walker
spacetimes, standard static spacetimes and Walker manifolds.
\end{abstract}

\maketitle

\section{Introduction}

In recent years, self-similar solutions, referred as soliton solutions of some geometric flow equations, have begun to be introduced and
studied as they appear to be possible singularity models. The most famous and studied classes of them are the Ricci solitons, defined as fixed
points of the Ricci flow and in this area, significant progress has been made. Then, in the late 1980's, Hamilton introduced the Yamabe flow to
prove the Yamabe problem, \cite{Hamilton88}. Basically, the Yamabe problem consists of looking for a metric on an $n\geq 3$ dimensional
manifold such that its scalar curvature is constant. Thus, the Yamabe flow is defined as the metric $g(t)$ on a Riemannian manifold $(M^n,g)$
satisfying $\frac{\partial g(t)}{\partial t}=-\tau g(t)$, where $\tau$ is the scalar curvature of $M$. The solution of this problem for
two-dimensional case is already guaranteed by the Uniformization Theorem.   For the studies on this concept we refer to \cite{Brendle1,
Brendle2, Daskalopoulos}. Gradient Yamabe solitons are the solutions of this flow and defined as follows:

A pseudo Riemannian manifold $(M^n,g)$ is said to be a gradient Yamabe soliton if there exists a smooth function $\varphi$ on $M$ and a
constant $\lambda$ satisfying
\begin{equation} \label{meqn:gradient-Yamabe}
{\rm Hess}(\varphi)=(\tau-\lambda)g.
\end{equation}
If $\lambda > 0, \lambda < 0$ or $\lambda = 0$, then $(M^n,g)$ is called a  shrinking, expanding or steady gradient Yamabe soliton
respectively.  One of the most important results recorded as the solution to the Yamabe problem is given in \cite{Daskalopoulos} and it is
proved  that the metric of any compact Yamabe gradient soliton is a metric of constant scalar curvature.

In \cite{Cao2012} it was proved that gradient Yamabe soliton admits a warped product structure, and this result enables to make possible
studies in both Riemannian and Lorentzian settings.

After  the Ricci flow theory is introduced and some progess was made, to classify the Riemannian manifolds and  generalize the Ricci solitons,
almost gradient Ricci solitons, quasi Einstein manifolds and generalized quasi Einstein manifolds have been introduced and studied extensively.
For these concepts, we refer to \cite{Cao-Ricci,Case2010,Case2011,BejanGuler,Catino,AltayGuler} and many others. Analogously, in literature
some generalizations of the self-similar solutions of Yamabe flow are also described. First the notion of quasi Yamabe gradient soliton in
\cite{Huang,Neto2015} and then  notion of generalized quasi Yamabe gradient soliton were introduced in \cite{Neto2016}:

An $n$-dimensional pseudo-Riemannian manifold $(M,g)$ is said
to be a generalized quasi Yamabe gradient soliton if there
exist smooth functions $\varphi$ and $\mu$ on $M$ and also a
constant $\lambda$ satisfying
\begin{equation} \label{meqn1}
{\rm Hess}(\varphi)=(\tau-\lambda)g + \mu {\rm d} \varphi \otimes {\rm d} \varphi
\end{equation}
\noindent where ${\rm d} \varphi$ is the dual 1-form of $\nabla \varphi$ and
$\tau$ is the scalar curvature of $M$. Here, $\varphi$ is called as
a potential function and the underlying generalized quasi Yamabe gradient soliton
(briefly GQY) is denoted by $(M,g,\varphi,\mu,\lambda).$

If  $\varphi$ is a constant function, we say that  $(M,g)$ is a trivial generalized quasi Yamabe gradient soliton. Otherwise, it will be called
non-trivial. The restricted case where $\mu$ is constant is called quasi Yamabe gradient soliton. Moreover, if  $\mu=0,$ then the equation
$\eqref{meqn1}$ reduces to the fundamental equation of  gradient Yamabe soliton \eqref{meqn:gradient-Yamabe}.

In \cite{Huang} it is proved that compact quasi Yamabe gradient soliton has constant scalar curvature and in \cite{Wang}, Wang has studied the
special generalized quasi Yamabe gradient soliton in which $\mu =\frac{1}{m}$  for some constant $m>0$ and proved that $m$-quasi Yamabe
gradient soliton also has warped product structure  in the region $||\nabla \varphi || \neq 0$. Moreover, the warping function is completely
defined by the potential function of the soliton.
Then in \cite{Neto2016}, Neto and Oliveira have extended  these results to the  generalized quasi Yamabe gradient solitons.

Inspired by these studies, we investigate some necessary and sufficient conditions for the existence of  generalized quasi Yamabe gradient
solitons  equipped with the warped product structure.  Then we study three important applications in the Lorentzian and the neutral  settings
for the class of  gradient Yamabe solitons.  We proved the existence of  the non-trivial gradient Yamabe soliton on generalized
Robertson-Walker spacetimes,  standard static spacetimes and Walker manifolds.


\section{Warped Product Generalized Quasi Yamabe Gradient Solitons}

Assume that $(B,g_B)$ and $(F,g_F)$ are two pseudo-Riemannian manifolds
of dimensions $r$ and $s,$ respectively. Let
$\pi:B \times F \rightarrow B$ and $\sigma: B \times F \rightarrow F$ be
the natural projection maps of the Cartesian product $B \times F$ onto $B$ and $F,$
respectively. Also, let
$b:B \rightarrow \left( 0,\infty \right) $ be a positive real-valued
smooth function. The warped product manifold $M=B\times _{b} F$ is the
the product manifold $B \times F$ equipped with the metric tensor defined by
\begin{equation}
\label{warped metric}
g=\pi^{\ast }\left( g_{B}\right) \oplus \left( b \circ \pi \right)
^{2}\sigma^{\ast }\left( g_{F}\right)
\end{equation}%
where $^{\ast }$ denotes the pull-back operator on tensors\cite%
{Bishop:1969,Oneill:1983,Shenawy:2015}. The function $b$ is called the
warping function of the warped product manifold $B \times _{b} F$, and the manifolds $B$ and $F$ are called base and fiber, respectively. In
particular, if $b=1$, then $B \times _{1}F = B \times F$ is the
usual Cartesian product manifold. For the sake of simplicity, throughout this paper, all relations will be written, without involving the
projection maps from $B \times F $ to each component $B$ and $F$ as in $g=g_{B}\oplus b^{2}g_{F}$.

\begin{proposition} Let $(M,g)$ be an $n$-dimensional pseudo-Riemannian
manifold. Then $(M,g,\varphi,\mu,\lambda)$ is a generalized quasi Yamabe gradient soliton  if
and only if
\begin{equation} \label{meqn2}
{\rm Hess}(\theta)=-\frac{\theta}{m}(\tau-\lambda)g
\end{equation}
where $\mu=1/m$ and $\theta={\rm e}^{-\varphi/m}.$
\end{proposition}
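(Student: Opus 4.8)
The plan is to verify the equivalence by a direct computation relating the Hessian of $\theta={\rm e}^{-\varphi/m}$ to that of $\varphi$. First I would record the chain rule for the Hessian of a composition $\theta=\phi\circ\varphi$ with $\phi\colon\mathbb{R}\to\mathbb{R}$ smooth: from ${\rm d}(\phi\circ\varphi)=(\phi'\circ\varphi)\,{\rm d}\varphi$, i.e. $\nabla(\phi\circ\varphi)=(\phi'\circ\varphi)\nabla\varphi$, one more covariant differentiation gives
\begin{equation*}
{\rm Hess}(\phi\circ\varphi)=(\phi'\circ\varphi)\,{\rm Hess}(\varphi)+(\phi''\circ\varphi)\,{\rm d}\varphi\otimes{\rm d}\varphi .
\end{equation*}
Applying this with $\phi(t)={\rm e}^{-t/m}$, so that $\phi'=-\tfrac1m\phi$ and $\phi''=\tfrac1{m^2}\phi$, yields
\begin{equation*}
{\rm Hess}(\theta)=-\frac{\theta}{m}\,{\rm Hess}(\varphi)+\frac{\theta}{m^{2}}\,{\rm d}\varphi\otimes{\rm d}\varphi .
\end{equation*}

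For the "only if" direction I would substitute the defining equation \eqref{meqn1} with $\mu=1/m$, namely ${\rm Hess}(\varphi)=(\tau-\lambda)g+\tfrac1m\,{\rm d}\varphi\otimes{\rm d}\varphi$, into the last identity. The two ${\rm d}\varphi\otimes{\rm d}\varphi$ contributions cancel exactly, leaving ${\rm Hess}(\theta)=-\tfrac{\theta}{m}(\tau-\lambda)g$, which is \eqref{meqn2}. For the converse, I would observe that $\theta={\rm e}^{-\varphi/m}$ is strictly positive, hence nowhere zero, so $\varphi=-m\ln\theta$ is a well-defined smooth function; the algebra above is then reversible, and solving the chain-rule identity for ${\rm Hess}(\varphi)$ while using \eqref{meqn2} recovers \eqref{meqn1} with $\mu=1/m$. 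Since every step is an equivalence, both implications follow.

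There is essentially no serious obstacle: the only points requiring care are the bookkeeping of signs and the powers of $m$ in $\phi'$ and $\phi''$, and the remark that $\theta$ never vanishes, which legitimizes passing back from $\theta$ to $\varphi$. I would also note in passing that this substitution is precisely what makes \eqref{meqn2} the convenient form for the warped-product analysis in the next section, as it is formally the equation for a potential of concircular type.
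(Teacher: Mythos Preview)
Your argument is correct and is essentially the same as the paper's: both derive the identity ${\rm Hess}(\theta)=-\tfrac{\theta}{m}\,{\rm Hess}(\varphi)+\tfrac{\theta}{m^{2}}\,{\rm d}\varphi\otimes{\rm d}\varphi$ (the paper by directly differentiating $\nabla\varphi=-\tfrac{m}{\theta}\nabla\theta$, you via the chain rule for ${\rm Hess}(\phi\circ\varphi)$) and then observe that substituting \eqref{meqn1} with $\mu=1/m$ cancels the ${\rm d}\varphi\otimes{\rm d}\varphi$ terms. Your explicit remark that $\theta>0$ makes the passage reversible is a small improvement in clarity over the paper's proof, which leaves the ``if'' direction implicit.
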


\begin{proof} Introduce $\mu=1/m$ and $\theta={\rm e}^{-\varphi/m}.$ Then
$\nabla \theta = -\frac{\theta}{m} \nabla \varphi $ and
${\rm d}\theta=-\frac{\theta}{m}{\rm d}\varphi.$ So, for vector fields $X$
and $Y$ on $M,$ we have:
\begin{eqnarray*} {\rm Hess}(\varphi)(X,Y) & = & g(\nabla_X \nabla \varphi,Y) \\
& = & g \Bigl(\frac{m}{\theta^2}X(\theta) \nabla \theta-\frac{m}{\theta}\nabla_X \nabla \theta,Y \Bigl) \\
& = & \frac{m}{\theta^2} X(\theta)Y(\theta)-\frac{m}{\theta}{\rm Hess}(\theta) \\
& = & \frac{1}{m} {\rm d} \varphi \otimes {\rm d} \varphi - \frac{m}{\theta} {\rm Hess}(\theta)
\end{eqnarray*}
Thus equation \eqref{meqn1} can be reduced to \eqref{meqn2}.
\end{proof}

As a direct corollary of the above proposition, we have:

\begin{corollary}
Every generalized quasi Yamabe  gradient soliton is conformal gradient soliton.
\end{corollary}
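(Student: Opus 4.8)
The plan is to read the corollary off directly from the preceding Proposition. Recall that a pseudo-Riemannian manifold $(M,g)$ carrying a smooth function $\theta$ is called a conformal gradient soliton when its Hessian is pointwise proportional to the metric, i.e. $\mathrm{Hess}(\theta)=\rho\, g$ for some smooth function $\rho$ on $M$; equivalently, $\nabla\theta$ is a closed conformal (concircular) vector field, since $\nabla_X\nabla\theta=\rho X$. So all one has to exhibit, starting from a GQY soliton $(M,g,\varphi,\mu,\lambda)$, is such a pair $(\theta,\rho)$.

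First I would set $\mu=1/m$ and $\theta=\mathrm{e}^{-\varphi/m}$ exactly as in the Proposition. By that Proposition, the GQY equation \eqref{meqn1} is equivalent to \eqref{meqn2}, namely
\[
\mathrm{Hess}(\theta)=-\frac{\theta}{m}(\tau-\lambda)\, g .
\]
Hence, taking $\rho:=-\frac{\theta}{m}(\tau-\lambda)$ — a smooth function on $M$, because $\theta$ is a nowhere-vanishing smooth exponential, $\tau$ is smooth, and $\lambda$ is constant — we obtain $\mathrm{Hess}(\theta)=\rho\, g$, which is precisely the defining equation of a conformal gradient soliton with potential $\theta$. Moreover $\theta$ is constant if and only if $\varphi$ is, so that a non-trivial GQY soliton yields a non-trivial conformal gradient soliton.

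There is essentially no obstacle here: the analytic content is entirely packaged in the Proposition, and the corollary amounts to the observation that the right-hand side of \eqref{meqn2} is a scalar multiple of $g$. The only point deserving a line of care is the regularity and global definition of $\theta$ and $\rho$: the substitution $\theta=\mathrm{e}^{-\varphi/m}$ is smooth and positive everywhere, so no domain restriction of the type $\|\nabla\varphi\|\neq 0$ is required at this stage. This closed conformal gradient field $\nabla\theta$ is exactly the structure that, through the classical results quoted in the introduction (e.g. \cite{Cao2012}), is later exploited to produce the warped-product decompositions.
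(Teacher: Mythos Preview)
Your argument is correct and is exactly the paper's intended one: the corollary is stated immediately after the Proposition with no further proof, and your write-up simply spells out that the right-hand side of \eqref{meqn2} is a scalar multiple of $g$, with $\rho=-\tfrac{\theta}{m}(\tau-\lambda)$.
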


Now, we can refer to the theorem of Cheeger and Colding \cite{Cheeger}, where the authors characterized the warped product manifolds and proved
that any conformal gradient soliton  satisfying  ${\rm Hess}(\theta)=kg$, for some function $k$ is isometric to a warped product on some open
interval. Thus, in view of $\eqref{meqn2}$ we conclude that $(M^n, g)$ is isometric to the warped product $I\times_l N$, for some positive
function $l$, where $I \subseteq \mathbb{R} $ is an
open interval. Thus:
\begin{corollary}
Every generalized quasi Yamabe  gradient soliton admits the warped product structure $I\times_l N$, for some positive function $l$, where $I
\subseteq \mathbb{R} $ is an open interval.
\end{corollary}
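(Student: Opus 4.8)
The plan is to leverage the reformulation already in hand together with the structure theorem of Cheeger and Colding. By the Proposition above, setting $\mu = 1/m$ and $\theta = {\rm e}^{-\varphi/m}$ converts the defining identity \eqref{meqn1} into the concircular equation ${\rm Hess}(\theta) = k\,g$ with $k = -\frac{\theta}{m}(\tau-\lambda)$ a smooth function on $M$; this is precisely the content of the preceding Corollary, that a GQY soliton is a conformal gradient soliton. It therefore remains only to argue that a manifold carrying a non-constant function $\theta$ whose Hessian is pointwise proportional to the metric is isometric to a warped product over an interval.

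First I would pass to the open set $\mathcal{U} = \{\,p : (\nabla\theta)_p \neq 0\,\}$, which coincides with $\{\nabla\varphi \neq 0\}$ because $\nabla\theta = -\frac{\theta}{m}\nabla\varphi$ and $\theta > 0$, and which is non-empty whenever the soliton is non-trivial. On $\mathcal{U}$ the equation ${\rm Hess}(\theta) = k g$ gives the standard consequences exploited in the Cheeger--Colding analysis: differentiating $g(\nabla\theta,\nabla\theta)$ and substituting the Hessian identity shows $g(\nabla\theta,\nabla\theta)$ is constant along each regular level set $\Sigma_c = \theta^{-1}(c)$; the $\Sigma_c$ are totally umbilic; and the integral curves of the normalized gradient of $\theta$ are geodesics meeting the level sets orthogonally. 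Parametrizing these geodesics by arc length produces a coordinate $t$ ranging over an open interval $I \subseteq \mathbb{R}$, identifies all level sets with a single manifold $N$, and puts the metric in the form $g = \varepsilon\,{\rm d}t^2 + l(t)^2 g_N$ with $\varepsilon = \pm 1$ and $l$ a positive function; this is exactly the Cheeger--Colding characterization \cite{Cheeger} invoked just before the statement, now applied to \eqref{meqn2}. Tracking the computation, $l'/l$ is expressed through $k$ and $\|\nabla\theta\|$, so the warping function $l$ is determined, up to the usual integration constants, by $\theta$ and hence by the potential $\varphi$.

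The step deserving the most care is the behaviour at the critical set of $\theta$, i.e. where $\nabla\varphi$ vanishes. Since ${\rm Hess}(\theta) = k g$ is nondegenerate wherever $k \neq 0$, such critical points are isolated and nondegenerate, and they correspond to the ``poles'' at which the fibre $N$ degenerates --- the model cases (a round sphere, or a flat/hyperbolic space form) which are themselves warped products over an interval --- so the Cheeger--Colding analysis extends the local picture across them and yields the global structure $M \cong I \times_l N$. If instead one reads the statement only as asserting the existence of \emph{some} such structure, it already follows by running the argument on the regular part $\mathcal{U}$; and one should keep in mind that for a trivial soliton $\theta$ is constant, so the substantive content --- and the case relevant to the applications that follow --- is the non-trivial one.
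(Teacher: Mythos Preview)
Your approach is essentially the same as the paper's: reduce the GQY equation to the concircular form ${\rm Hess}(\theta)=k\,g$ via the preceding Proposition, then invoke the Cheeger--Colding characterization of warped products to conclude $M\cong I\times_l N$. The paper simply cites \cite{Cheeger} as a black box, whereas you sketch the mechanism (umbilic level sets, geodesic normal flow, behaviour at critical points); your added detail is correct and helpful but not a different route.
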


This section presents the main result:

\begin{theorem} \label{main-1} Let $M=B \times _{b}F$ be a
warped product manifold equipped with the metric
$g=g_{B}\oplus b^{2}g_{F}.$  Then $(M, g, \varphi, \mu, \lambda)$
is a generalized quasi Yamabe gradient soliton if and only if  the followings hold:
\begin{enumerate}
\item the potential function $\varphi$ depends only on the base manifold $B,$
\item the potential function $\varphi$ and the warping function $b$ cannot be orthogonal,
\item the base manifold $B$ is also a conformal gradient soliton,
\item the scalar curvature $\tau_F$ of the fiber manifold $(F,g_F)$ is constant.
\end{enumerate}
\end{theorem}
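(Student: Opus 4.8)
The plan is to write out the warped product curvature and Hessian identities, substitute them into the soliton equation \eqref{meqn1}, and decompose the resulting tensor identity along directions tangent to $B$ and tangent to $F$. Concretely, for $M = B\times_b F$ with $g = g_B \oplus b^2 g_F$, recall the standard O'Neill formulas: for $X,Y$ vector fields lifted from $B$ and $V,W$ lifted from $F$, one has $\mathrm{Hess}(\varphi)(X,Y) = \mathrm{Hess}^B(\varphi)(X,Y)$ when $\varphi$ depends only on $B$, $\mathrm{Hess}(\varphi)(X,V) = X(\varphi)V(\ln b)$-type cross terms, and $\mathrm{Hess}(\varphi)(V,W) = -\,\dfrac{g_B(\nabla^B b,\nabla^B \varphi)}{b}\,b^2 g_F(V,W)$ for the fiber block, together with the scalar curvature relation $\tau = \tau_B - 2\dfrac{s}{b}\Delta^B b - s(s-1)\dfrac{g_B(\nabla^B b,\nabla^B b)}{b^2} + \dfrac{\tau_F}{b^2}$. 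I would cite \cite{Oneill:1983} for these.

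First I would show (1): feed a vector field $V$ tangent to $F$ and $X$ tangent to $B$ into \eqref{meqn1}. The left side (the mixed Hessian component) and the right side both must match; since $(\tau - \lambda)g(X,V) = 0$ and $\mu\, \mathrm{d}\varphi\otimes\mathrm{d}\varphi(X,V) = \mu X(\varphi)V(\varphi)$, and the mixed Hessian term forces a relation that can only be consistent if $V(\varphi) = 0$ for all fiber directions, i.e. $\nabla\varphi$ has no $F$-component, so $\varphi = \varphi\circ\pi$ depends only on $B$. Next, for (3) and (4), restrict \eqref{meqn1} to the $B\times B$ block: this gives $\mathrm{Hess}^B(\varphi) = (\tau-\lambda)g_B + \mu\,\mathrm{d}\varphi\otimes\mathrm{d}\varphi$ on $B$, and after applying the Proposition (pass to $\theta = e^{-\varphi/m}$) this says $\mathrm{Hess}^B(\theta) = -\dfrac{\theta}{m}(\tau-\lambda)g_B$, which is exactly the conformal gradient soliton equation on $B$ since $\tau$ — though it contains $b$-terms — is still a function on $B$ after (1); hence $B$ is a conformal gradient soliton. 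Then restricting \eqref{meqn1} to the $F\times F$ block and using the fiber Hessian formula gives $-\dfrac{g_B(\nabla^B b,\nabla^B\varphi)}{b} = \tau - \lambda$ as functions on $B$; combining this with the scalar curvature decomposition and the fact that the only term in $\tau$ depending on $F$ is $\tau_F/b^2$, we see $\tau_F$ must be constant on $F$ (it appears as a function pulled back from $F$ equated to something depending only on $B$), which is (4). For (2): the relation $g_B(\nabla^B b,\nabla^B\varphi) = -b(\tau-\lambda)$ means that if $\nabla b$ and $\nabla\varphi$ were $g_B$-orthogonal we would force $\tau = \lambda$ constant, and then $\mathrm{Hess}(\varphi) = \mu\,\mathrm{d}\varphi\otimes\mathrm{d}\varphi$, which (together with connectedness and a short argument) collapses the structure — so in the genuine (non-degenerate) soliton case orthogonality is impossible.

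For the converse direction, I would assume (1)–(4) and simply reassemble: define $\tau$ by the warped product scalar curvature formula (which is well-defined since $\tau_F$ is constant by (4) and $\varphi$ lives on $B$ by (1)), use that $B$ is a conformal gradient soliton to get the $B\times B$ block of \eqref{meqn1}, use (2) together with the fiber Hessian formula to match the $F\times F$ block, and note the mixed block vanishes automatically because $\varphi$ has no fiber component. The main obstacle I anticipate is bookkeeping: getting all the $b$, $\ln b$, $\Delta^B b$, $\nabla^B b$ terms to cancel consistently across the three blocks, and in particular making the argument for (2) fully rigorous rather than heuristic — one has to be careful about what ``non-trivial'' buys us and whether $\tau - \lambda \equiv 0$ is genuinely excluded or merely degenerate. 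I would handle (2) by phrasing it as: the soliton equations force $g_B(\nabla^B b, \nabla^B\varphi) = -b(\tau-\lambda)$, so $\nabla b \perp \nabla\varphi$ at a point iff $\tau = \lambda$ there, and a global orthogonality would make $\varphi$ satisfy $\mathrm{Hess}(\varphi) = \mu\,\mathrm{d}\varphi\otimes\mathrm{d}\varphi$, forcing the level sets and the warped structure to degenerate — contradicting the standing assumptions. The remaining steps are routine tensor algebra with the O'Neill identities.
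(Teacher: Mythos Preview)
Your overall strategy---decompose the soliton equation into the $B\times B$, $B\times F$, and $F\times F$ blocks via the O'Neill warped-product formulas---is exactly what the paper does. The difference is in the order of operations, and that difference creates a real gap in your step (1).

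The paper applies the Proposition \emph{first}, replacing \eqref{meqn1} by the equivalent equation ${\rm Hess}(\theta)=-\dfrac{\theta}{m}(\tau-\lambda)g$ with $\theta=e^{-\varphi/m}$. The point of doing this at the outset is that the $\mu\,{\rm d}\varphi\otimes{\rm d}\varphi$ term disappears, so on the mixed block one gets simply ${\rm Hess}(\theta)(X,V)=0$. The warped-product Hessian then yields $bX(b)\,g_F({\rm nor}(\nabla\theta),V)=0$, hence ${\rm nor}(\nabla\theta)=0$ (for nonconstant $b$), i.e.\ $\theta$---and so $\varphi$---depends only on $B$. By contrast, you work with $\varphi$ directly for (1), so your mixed-block equation reads ${\rm Hess}(\varphi)(X,V)=\mu\,X(\varphi)V(\varphi)$, and your assertion that this ``forces a relation that can only be consistent if $V(\varphi)=0$'' is not justified: with a nonzero right-hand side and with $\nabla\varphi$ not yet known to be tangent to $B$, the O'Neill expansion of the left side is messy and the conclusion does not follow by inspection. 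The fix is immediate---just pass to $\theta$ before, not after, the block decomposition, as you already do for (3).

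Two minor remarks. Your derivation of (4) via the warped scalar-curvature formula $\tau=\tau_B-2\dfrac{s}{b}\Delta^B b-s(s-1)\dfrac{|\nabla^B b|^2}{b^2}+\dfrac{\tau_F}{b^2}$ is more explicit than the paper's, which simply asserts the conclusion after obtaining ${\rm Hess}^B(\theta)=\dfrac{\theta}{m}(\lambda-\tau)g_B$. Likewise your reading of (2) as the relation $g_B(\nabla^B b,\nabla^B\varphi)=-b(\tau-\lambda)$ is exactly what the paper extracts from the fiber block (written there in terms of $\theta$), though neither you nor the paper makes the ``cannot be orthogonal'' claim fully rigorous; your own caveat about the $\tau\equiv\lambda$ degenerate case is well placed.
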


\begin{proof} Assume that $(M, g, \varphi, \mu, \lambda)$ is a generalized
quasi Yamabe gradient soliton which is a also warped product. If $X,Y$ are
vector fields on $B$ and $V,W$ are vector fields on $F,$ then apply the last
proposition and obtain: \\
${\rm Hess}(\theta)(X,V)=0$ since $g(X,V)=0.$ On the other hand, by decomposing $\nabla \theta$ on the base and fiber,
\begin{eqnarray*} {\rm Hess}(\theta)(X,V) & = & g(\nabla_X \nabla \theta,V)\\
& = & g(\nabla_X {\rm tan}(\nabla \theta),V) + g(\nabla_X {\rm nor}(\nabla \theta),V) \\
& = & b X(b) g_F({\rm nor}(\nabla \theta),V).
\end{eqnarray*}
Thus, $X(b) g_F({\rm nor}(\nabla \theta),V)=0.$ If $b$ is not constant, the last
equation implies that ${\rm nor}(\nabla \theta)=0.$ So, $\theta={\rm e}^{-\varphi/m}$
depends only on the base manifold $B,$ that is, $\varphi$ is only defined on $B,$
$\varphi \in \mathcal C^\infty(B).$

Moreover, $g(V,W)=b^2 g_F(V,W)$ and $\nabla \theta = {\rm tan}(\nabla \theta)$
since ${\rm nor}(\nabla \theta)=0.$

\begin{eqnarray*} {\rm Hess}(\theta)(V,W) & = & g(\nabla_V \nabla \theta,W) \\
& = & g(\nabla_V {\rm tan}(\nabla \theta),W) \\
& = & b {\rm tan}(\nabla \theta) g_F(V,W).
\end{eqnarray*}
Thus, by the last proposition,
$$b {\rm tan}(\nabla \theta) g_F(V,W)=-\frac{\theta}{m} (\tau-\lambda) b^2 g_F(V,W)$$
Equivalently,
$$ \Bigl( b {\rm tan}(\nabla \theta)(b)+
\frac{b^2 \theta}{m} (\tau-\lambda) \Bigl)g_F(V,W)=0.$$
Contracting the last equation over $V$ and $W,$ we obtain:

$$s b ({\rm tan}\bigl(\nabla \theta)(b) + \frac{b \theta}{m}(\tau-\lambda)\bigl)=0.$$
Noting that ${\rm tan}(\nabla \theta)=\nabla^B(\theta)$ since $\theta \in \mathcal C^\infty(B),$
we have:
$$g_B(\nabla^B \theta, \nabla^B b) = (\lambda-\tau) \frac{b \theta }{m}.$$
Finally,
\begin{eqnarray*} {\rm Hess}(\theta)(X,Y) & = & g(\nabla_X \nabla \theta,Y) \\
& = & g_B(\nabla_X {\rm tan}(\nabla \theta),Y) \\
& = & {\rm Hess}^B(\theta)(X,Y)
\end{eqnarray*}
since $\theta  \in \mathcal C^\infty(B),$ i.e,
${\rm tan}(\nabla \theta)=\nabla^B(\theta).$
By the last proposition, we can have:
$${\rm Hess}^B(\theta)=\frac{\theta}{m}(\lambda-\tau)g_B.$$
Hence, $B$ is conformal gradient soliton and scalar curvature $\tau_F$ of the fiber manifold $(F,g_F)$ is constant.
The converse statement is straightforward.
\end{proof}

Combining the above results, we can state that:

\begin{corollary}
Let  $(M, g, \varphi, \mu, \lambda)$ be a generalized quasi Yamabe gradient soliton satisfying the conditions (1)-(4) of Theorem \ref{main-1}.
Then it admits the multiply warped product structure $I\times_l N \times_b F$, for some positive functions $l$ and $b$, where $I \subseteq
\mathbb{R} $ is an open interval.
\end{corollary}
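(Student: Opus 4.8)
The plan is to iterate the two warped-product decompositions already available. Assume $(M,g,\varphi,\mu,\lambda)$ is a generalized quasi Yamabe gradient soliton satisfying (1)--(4) of Theorem~\ref{main-1}. By item (3), the base $(B,g_B)$ is a conformal gradient soliton; more precisely, the proof of Theorem~\ref{main-1} produces the identity ${\rm Hess}^B(\theta)=\tfrac{\theta}{m}(\lambda-\tau)g_B$ with $\theta={\rm e}^{-\varphi/m}\in\mathcal C^\infty(B)$, which is an equation of the form ${\rm Hess}^B(\theta)=k\,g_B$ with $k=\tfrac{\theta}{m}(\lambda-\tau)$. This is exactly the hypothesis of the Cheeger--Colding characterization invoked after \eqref{meqn2}, so $B$ is isometric to a warped product $I\times_l N$ for some open interval $I\subseteq\mathbb R$ and some positive function $l$.

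Next I would substitute this decomposition of $B$ into $M=B\times_bF$. Writing the base metric in the Cheeger--Colding normal form $g_B=\varepsilon\,dt^2+l(t)^2 g_N$, where $\varepsilon=\pm1$ records the causal character of $\partial_t$, the warped product metric \eqref{warped metric} becomes
\[
g \;=\; \varepsilon\,dt^2 + l^2\,g_N + b^2\,g_F,
\]
which is precisely the metric of the multiply warped product $I\times_l N\times_b F$ with base $I$ and fibers $N$ and $F$. Since $l\in\mathcal C^\infty(I)$ and $b\in\mathcal C^\infty(B)=\mathcal C^\infty(I\times_l N)$, this substitution already displays $M$ with the asserted structure, with $l$ and $b$ the two positive warping functions.

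The delicate point — and the one I expect to be the main obstacle — is verifying that the warping function $b$ is genuinely a function on the base factor $I$ alone, so that $I\times_l N\times_b F$ is a multiply warped product in the strict sense rather than merely a nested warped product. For this I would return to the contracted relation $g_B(\nabla^B\theta,\nabla^B b)=(\lambda-\tau)\tfrac{b\theta}{m}$ obtained in the proof of Theorem~\ref{main-1}, combine it with the fact that in the Cheeger--Colding normal form the potential $\theta$ depends only on $t$ (so $\nabla^B\theta$ is collinear with $\partial_t$), and use the structure equations of $I\times_l N$ to constrain the $N$-dependence of $b$. If instead one adopts the looser convention in which the warping functions of a ``multiply warped product'' may depend on the whole first factor, then the corollary is immediate from the substitution in the preceding paragraph and no further argument is required.
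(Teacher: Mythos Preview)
Your approach is exactly the paper's: the corollary is stated with no explicit proof beyond the phrase ``Combining the above results, we can state that'', which amounts to your first two paragraphs---apply Cheeger--Colding to the conformal gradient soliton $B$ from condition (3) of Theorem~\ref{main-1} to get $B\cong I\times_l N$, then nest this inside $M=B\times_b F$. Your third paragraph raises a legitimate subtlety about whether $b$ descends to a function on $I$ alone, but the paper does not address this point and implicitly adopts the looser reading you describe.
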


\section{Applications}

\subsection{Gradient Yamabe Soliton on Generalized Robertson-Walker Space-times}

We first define generalized Robertson-Walker space-times. Let $(F,g_F)$ be an $%
s-$dimensional Riemannian manifold and $b:I\rightarrow (0,\infty )$ be a
smooth function. Then $(s+1)-$dimensional product manifold $I\times _{b}F$
furnished with the metric tensor
\begin{equation*}
\bar{g}=-\mathrm{d}t^{2}\oplus b^{2}g_F
\end{equation*}%
is called a generalized Robertson-Walker space-time and is denoted by
$M=I \times _{b}F$ where $I$ is an open, connected subinterval of $\mathbb{R}$
and $\mathrm{d}t^{2}$ is the Euclidean metric tensor on $I$. This structure
was introduced to extend Robertson-Walker space-times \cite%
{Sanchez98, Sanchez99} and have been studied by many authors, such
as \cite{ManticaDe1,ManticaDe2,Chen}. From now on, we will denote $\frac{\partial }{%
\partial t}\in \mathfrak{X}(I)$ by $\partial _{t}$ to state our results in
simpler forms.

We will apply our main result Theorem \ref{main-1}. Assume that
$\varphi \in \mathcal C^\infty(I)$ is a potential function for a
generalized Robertson-Walker space-time of the form $M=I \times _{b}F.$

The equation $(\tau-\lambda)g_{ij}={\rm Hess}_{ij}$ yields
\begin{equation}
\label{system.1}
 \left\{ \begin{array}{ll}
\varphi^{\prime \prime} = -(\tau-\lambda),  \\
b^\prime \varphi^\prime = (\tau-\lambda)b.
\end{array} \right.
\end{equation}

Thus $b \varphi^{\prime \prime} = - b^\prime \varphi^\prime.$ By solving
the last ODE, as $b \neq 0,$ we have:
$$\varphi(t)=\alpha \int_{t_0}^t \frac{1}{b(\bar t)}{\rm d}\bar t \quad \text{for some}
\quad \alpha \in \mathbb R.$$

Hence, we can state that:
\begin{theorem}
A generalized Robertson-Walker space-time of the form $M=I \times _{b}F$
is a gradient Yamabe soliton with the potential function $\varphi$ given by
$$\varphi(t)=\alpha \int_{t_0}^t \frac{1}{b(\bar t)}{\rm d}\bar t \quad \text{for some}
\quad \alpha \in \mathbb R.$$

\end{theorem}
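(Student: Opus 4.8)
The plan is to specialize Theorem \ref{main-1} to the case $\mu = 0$ (so $m \to \infty$, equivalently $\theta = e^{-\varphi/m}$ degenerates and one works directly with \eqref{meqn:gradient-Yamabe}) and to the base $B = I$ with the negative-definite metric $-\mathrm{d}t^2$. First I would write out the Hessian of $\varphi$ on the warped product $M = I \times_b F$ using the standard warped-product connection formulas (as in O'Neill): since $\varphi$ is assumed to depend only on $t \in I$, one gets $\mathrm{Hess}(\varphi)(\partial_t,\partial_t) = \varphi''$, $\mathrm{Hess}(\varphi)(\partial_t, V) = 0$ for $V \in \mathfrak{X}(F)$, and $\mathrm{Hess}(\varphi)(V,W) = b b' \varphi' \, g_F(V,W)$. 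Matching these against $(\tau - \lambda) g$ — noting $g(\partial_t,\partial_t) = -1$ and $g(V,W) = b^2 g_F(V,W)$ — produces exactly the system \eqref{system.1}. The sign in the first equation comes from $g(\partial_t,\partial_t) = -1$; I would double-check this against the Lorentzian convention, since that is the one place a careless reader could go wrong.

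Next I would eliminate $\tau - \lambda$ between the two equations of \eqref{system.1}: from the first, $\tau - \lambda = -\varphi''$, and substituting into the second gives $b'\varphi' = -b\varphi''$, i.e. the separable ODE $\dfrac{\varphi''}{\varphi'} = -\dfrac{b'}{b}$ valid wherever $\varphi' \neq 0$ and $b \neq 0$ (the latter holding always since $b > 0$). Integrating yields $\ln|\varphi'| = -\ln b + \text{const}$, hence $\varphi'(t) = \alpha / b(t)$ for some constant $\alpha \in \mathbb{R}$, and integrating once more gives the claimed formula
\[
\varphi(t) = \alpha \int_{t_0}^{t} \frac{1}{b(\bar t)}\,\mathrm{d}\bar t.
\]
One then checks the converse: given such a $\varphi$, define $\lambda$ and verify that $\tau$ (the scalar curvature of $M$, which for a GRW spacetime is a known function of $b$, $b'$, $b''$ and $\tau_F$) satisfies both equations of \eqref{system.1} simultaneously — this is where the hypothesis that $\tau_F$ is constant, inherited from Theorem \ref{main-1}(4), is used, and it constrains the admissible $b$ unless one is content to read the theorem as "there exists a potential function of this form whenever the soliton equation is solvable."

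The main obstacle, and the point I would be most careful about, is precisely this consistency between the two scalar equations: \eqref{system.1} is two conditions on the single unknown $\varphi$ (with $\lambda$ a free constant and $\tau$ determined by the geometry), so they are not independent — differentiating the second and comparing with the first, or substituting the known expression for the scalar curvature $\tau$ of $I\times_b F$, imposes a compatibility relation among $b$, $\tau_F$, and $\lambda$. I would address this by invoking Theorem \ref{main-1}: parts (1)–(2) guarantee $\varphi = \varphi(t)$ with $\varphi' \not\equiv 0$, part (3) gives the base-manifold conformal-soliton equation which on the one-dimensional base $I$ with metric $-\mathrm{d}t^2$ is automatically the first equation of \eqref{system.1}, and part (4) forces $\tau_F$ constant so that the second equation becomes consistent. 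With that structural input the derivation above is just the integration of one separable ODE, and the rest is the routine converse verification which I would not grind through here.
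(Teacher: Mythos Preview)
Your proposal is correct and follows essentially the same route as the paper: derive the two scalar equations \eqref{system.1} from $\mathrm{Hess}(\varphi)=(\tau-\lambda)g$ on the GRW warped product with $\varphi=\varphi(t)$, combine them to get $b\varphi''=-b'\varphi'$, and integrate to obtain $\varphi'=\alpha/b$ and hence the stated integral formula. The paper's argument is terser --- it does not spell out the Hessian computation or the separation-of-variables step, and it does not address the consistency issue you raise --- but the method is the same.
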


\subsection{Gradient Yamabe Soliton on Standard Static Space-times}

We begin by defining standard static space-times. Let $(F,g_F)$ be an
$s-$dimensional Riemannian manifold and $f: F \rightarrow (0,\infty )$ be a smooth
function. Then $(s+1)-$dimensional product manifold $_{f}I\times F$ furnished
with the metric tensor%
\begin{equation*}
g=-f^{2}\mathrm{d}t^{2}\oplus g_F
\end{equation*}%
is called a standard static space-time and is denoted by
$M=_{f} I\times F$ where $I$ is an open, connected subinterval of $\mathbb{R}$
and $dt^{2}$ is the Euclidean metric tensor on $I$.

Note that standard static space-times can be considered as a generalization
of the Einstein static universe\cite{AD1,AD,GES,Besse2008} and many spacetime
models that characterize the universe and the solutions of Einstein's field
equations are known to have this  structure.

Again we apply Theorem \ref{main-1}. Suppose that $\varphi \in
\mathcal C^\infty(F)$ is a potential function for a
standard static space-time of the form $M=_{f} I\times F.$

The equation $(\tau-\lambda)g_{ij}={\rm Hess}_{ij}$ yields
\begin{equation}
\label{system.2}
 \left\{ \begin{array}{ll}
\nabla \varphi(f) = (\tau-\lambda) f,  \\
{\rm Hess}_F(\varphi)=(\tau-\lambda)g_F.
\end{array} \right.
\end{equation}

By contracting the last equation over $F,$ we have $\Delta_F(f)=s(\tau-\lambda).$
Then we obtain: $$\Delta_F(\varphi)=\frac{s}{f}\nabla \varphi(f).$$

Hence, we conclude that:
\begin{theorem}
A standard static space-time of the form  $M=_{f} I\times F$
is a gradient Yamabe soliton with the potential function $\varphi$ given by
$$\Delta_F(\varphi)=\frac{s}{f}\nabla \varphi(f).$$

\end{theorem}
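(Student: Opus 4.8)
The plan is to regard the standard static space-time $M={}_{f}I\times F$ as the warped product $F\times_{f}I$ with base $(F,g_{F})$, fiber $(I,-\mathrm{d}t^{2})$ and warping function $f$, and then to run the argument of Theorem \ref{main-1} in the gradient Yamabe case $\mu=0$ (where the soliton equation is already in the form $\mathrm{Hess}(\varphi)=(\tau-\lambda)g$, so one works with $\varphi$ directly in place of $\theta$). First I would observe that, assuming $f$ is non-constant on the base (if $f$ is constant, $M$ is a metric product and the statement is immediate), the computation behind Theorem \ref{main-1}\,(1) forces the potential function to be constant along the $I$-factor, so that $\varphi\in\mathcal{C}^{\infty}(F)$; this is precisely the standing hypothesis under which the system \eqref{system.2} is written.

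Next I would expand $\mathrm{Hess}(\varphi)=(\tau-\lambda)g$ componentwise along the splitting $\mathfrak{X}(M)=\mathfrak{X}(F)\oplus\mathfrak{X}(I)$ using the O'Neill connection formulas for a warped product \cite{Oneill:1983}. For $X,Y\in\mathfrak{X}(F)$ one has $\mathrm{Hess}(\varphi)(X,Y)=\mathrm{Hess}_{F}(\varphi)(X,Y)$ because $\varphi$ lives on the base, which gives the second equation of \eqref{system.2}; the mixed term $\mathrm{Hess}(\varphi)(X,\partial_{t})=0=g(X,\partial_{t})$ yields nothing new. For the timelike direction, $\partial_{t}\varphi=0$ reduces $\mathrm{Hess}(\varphi)(\partial_{t},\partial_{t})$ to $-(\nabla_{\partial_{t}}\partial_{t})(\varphi)$, and the warped-product formula together with $\langle\partial_{t},\partial_{t}\rangle=-f^{2}$ gives $\nabla_{\partial_{t}}\partial_{t}=f\,\nabla^{F}f$, hence $\mathrm{Hess}(\varphi)(\partial_{t},\partial_{t})=-f\,g_{F}(\nabla^{F}f,\nabla^{F}\varphi)=-f\,\nabla\varphi(f)$; comparing this with $(\tau-\lambda)g(\partial_{t},\partial_{t})=-(\tau-\lambda)f^{2}$ and cancelling $f>0$ produces the first equation of \eqref{system.2}.

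Finally, taking the metric trace of $\mathrm{Hess}_{F}(\varphi)=(\tau-\lambda)g_{F}$ over the $s$-dimensional base gives $\Delta_{F}(\varphi)=s(\tau-\lambda)$; since $f>0$, the first equation of \eqref{system.2} permits the substitution $\tau-\lambda=\nabla\varphi(f)/f$, which yields the asserted relation $\Delta_{F}(\varphi)=\frac{s}{f}\,\nabla\varphi(f)$. The computation is essentially routine; the one place calling for care is the consistent bookkeeping of the Lorentzian sign, namely $\langle\partial_{t},\partial_{t}\rangle=-f^{2}<0$, throughout the warped-product Hessian identity --- a dropped sign there flips the first equation of \eqref{system.2} and spoils the final formula.
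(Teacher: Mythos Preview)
Your proposal is correct and follows essentially the same route as the paper: identify $M={}_{f}I\times F$ with the warped product $F\times_{f}(I,-\mathrm{d}t^{2})$, invoke Theorem~\ref{main-1} to conclude $\varphi\in\mathcal{C}^{\infty}(F)$, derive the two equations of \eqref{system.2}, and then trace the second and substitute the first. The only difference is that you supply the explicit O'Neill computation of $\nabla_{\partial_{t}}\partial_{t}=f\,\nabla^{F}f$ (with the correct Lorentzian sign bookkeeping) to justify \eqref{system.2}, whereas the paper simply records that system as the componentwise content of $\mathrm{Hess}(\varphi)=(\tau-\lambda)g$ without further comment.
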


\subsection{Gradient Yamabe Soliton on 3-dimensional Walker Manifolds}

Three-dimensional Lorentzian manifolds admitting a parallel degenerate line field is called Walker manifold, \cite{Walker1,Walker2}. Then then
there exist local coordinates $(t,x,y)$ such that with respect to the local frame fields $\{ \partial_t, \partial_x, \partial_y \}$ the
Lorentzian metric tensor is of the form:
\begin{equation}
\label{Walker metric}
g=2{\rm d}t{\rm d}y + {\rm d}x^2 + \phi(t,x,y) {\rm d}y^2,
\end{equation}
for some function $ \phi(t,x,y)$. The restricted case of Walker manifold where $\phi$ described as a function of only $x$ and $y$ is called
strictly Walker manifold and these classes of Walker manifolds are geodesically complete. Also, it is known that a Walker manifold is Einstein
if and only if it is flat, \cite{Walker1}.

Now, we will investigate conditions on this particular class of manifolds
to have gradient Yamabe solitons, that is,
\begin{equation} \label{maineqn3WM}{\rm Hess}(f)_{ij} = (\tau-\lambda)g_{ij}
\end{equation}
where $f$ is a potential function.

Note that this equation implies that
$$\frac{\Delta(f)}{3}=\tau-\lambda.$$

By using the metric $\eqref{Walker metric}$ and straightforward computations,
we have:
\begin{equation}
\label{system.3}
 \left\{ \begin{array}{ll}
{\rm Hess}(f)_{tt} = f_{tt}, \\
{\rm Hess}(f)_{tx} = f_{tx}, \\
{\rm Hess}(f)_{ty} = f_{ty} - \frac{1}{2} \phi_t f_t, \\
{\rm Hess}(f)_{xx} = f_{xx}, \\
{\rm Hess}(f)_{xy} = f_{xy} - \frac{1}{2} \phi_x f_t, \\
{\rm Hess}(f)_{yy} = f_{yy} - \frac{1}{2} \phi \phi_t f_t + \frac{1}{2} \phi_t f_y.
\end{array} \right.
\end{equation}

Moreover,
\begin{equation} \label{f_delta}
\Delta(f)=-\phi f_{tt} + 2f_{ty} - \phi_t f_t + f_{xx}
\end{equation}

By combining these, we get:

\begin{equation} \label{eqn3WM}
\tau-\lambda= \frac{1}{3} \Bigl( -\phi f_{tt} +2 f_{ty} - \phi_t f_t + f_{xx} \Bigl)
\end{equation}

By applying equations \eqref{maineqn3WM}, $\eqref{system.3}$ and \eqref{eqn3WM}, we obtain the following system of PDEs:
\begin{equation}
\label{system.4}
 \left\{ \begin{array}{ll}
f_{tt}=0, \\
f_{tx}=0,\\
f_{xy}=\frac{1}{2} \phi_x f_t,\\
f_{xx}-f_{ty}=-\frac{1}{2} \phi_t f_t,\\
f_{yy}-f_{xx}=\frac{1}{2}\phi \phi_t f_t - \frac{1}{2} \phi_t f_y.
\end{array} \right.
\end{equation}

Notice first that $f_{tt}=0$ and $f_{tx}=0$ imply that $f(t,x,y)=t b(y) + c(x,y)$
for some functions $b$ and $c.$

Thus $f_{xy}=\frac{1}{2} \phi_x f_t$ turns out to be $2c_{xy}(x,y)=\phi_x b(y).$
Substituting them in the last two equations of the system $\eqref{system.4}$, we obtain

\begin{equation}
\label{wm_m1}
2c_{xx}(x,y) + \phi_t b(y) = 2b^\prime(y)
\end{equation}

Differentiating the both sides of equation \eqref{wm_m1} with respect to
$t,$ we get $\phi_{tt}b(y)=0$ this implies that $\phi_{tt}=0$ or $b(y)=0.$

$\bullet$ \textit{Case 1}: $\phi_{tt}=0.$ Then  the metric becomes Ricci-flat so we may assume that this case should be excluded.

$\bullet$ \textit{Case 2}: If $\phi_{tt} \neq 0,$ then $b(y)=0,$ i.e, $f(t,x,y)=c(x,y).$

Thus $c_{xx}=0$ and $c_{xy}=0$ imply that $c(x,y)=\kappa x+\eta(y)$, for some $\kappa \in \mathbb{R}$.  Hence
$2 \eta^{\prime \prime}(y) + \phi_t \eta^\prime(y)=0.$ So, $\phi_t=-2\ln[\eta^\prime(y)]$
and then $\phi(t,x,y)=-2t\ln[\eta^\prime(y)] + \zeta(x,y).$ In this case, the potential
function is given by: $f(x,y)=\kappa x+ \eta(y).$ Hence we obtain the main result of this section:

\begin{theorem} \label{thm:Walker}
Let $(M,g)$ be a 3-dimensional Lorenztian Walker manifold
equipped with metric:
$$g=2{\rm d}t{\rm d}y + {\rm d}x^2 + \phi(t,x,y) {\rm d}y^2.$$
Then $(M,g)$ is a gradient Yamabe soliton if and only if
\begin{enumerate}
\item the potential function of the soliton structure is given by \\
$f(x,y)=\kappa x+ \eta(y)$ for some $\kappa  \in \mathbb R.$
\item the function defined in the Walker metric is given by \\
$\phi(t,x,y)=-2t\ln[\eta^\prime(y)] + \zeta(x,y).$
\end{enumerate}
\end{theorem}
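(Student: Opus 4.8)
The plan is to work entirely in the Walker chart $(t,x,y)$ and convert the gradient Yamabe equation \eqref{maineqn3WM} into an explicit PDE system for the potential $f$ and the metric function $\phi$. First I would compute the Levi-Civita connection of the metric \eqref{Walker metric} — its nonvanishing Christoffel symbols are all built from the first derivatives of $\phi$ — and from it the components of $\operatorname{Hess}(f)$, which yields the six relations in \eqref{system.3}. Tracing \eqref{maineqn3WM} gives $\tau-\lambda=\frac{1}{3}\Delta(f)$, and substituting the Laplacian formula \eqref{f_delta} produces \eqref{eqn3WM}. Matching \eqref{maineqn3WM} componentwise against \eqref{system.3} and \eqref{eqn3WM} then collapses everything to the system \eqref{system.4}; in particular the components in which $g_{ij}$ vanishes force $f_{tt}=0$ and $f_{tx}=0$ right away.

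From $f_{tt}=f_{tx}=0$ I would integrate to $f(t,x,y)=t\,b(y)+c(x,y)$. Feeding this ansatz into the remaining equations of \eqref{system.4}: the $f_{xy}$-equation gives $2c_{xy}=\phi_x b(y)$, and the $f_{xx}-f_{ty}$ equation gives \eqref{wm_m1}, namely $2c_{xx}+\phi_t b(y)=2b'(y)$. Differentiating \eqref{wm_m1} with respect to $t$, and using that neither $c$ nor $b$ depends on $t$, yields $\phi_{tt}\,b(y)=0$, so we are forced into the dichotomy $\phi_{tt}\equiv 0$ or $b\equiv 0$.

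In the first case the metric \eqref{Walker metric} becomes Ricci-flat (its Ricci tensor reduces to a single component proportional to $\phi_{tt}$), so by the cited fact that a Walker manifold is Einstein precisely when it is flat, this is the trivial/degenerate situation and is excluded. In the remaining case $b\equiv 0$, hence $f=c(x,y)$ with $f_t=0$; then the $f_{xy}$- and $f_{xx}-f_{ty}$ equations force $c_{xy}=c_{xx}=0$, so $c(x,y)=\kappa x+\eta(y)$ with $\kappa\in\mathbb{R}$, which is assertion (1). Substituting $f=\kappa x+\eta(y)$ into the last equation of \eqref{system.4} gives the linear ODE $2\eta''(y)+\phi_t\,\eta'(y)=0$; solving for $\phi_t$ and integrating in $t$ then delivers the form of $\phi$ recorded in assertion (2). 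The converse direction is a direct verification: for $f$ and $\phi$ of these two forms one checks that \eqref{system.3} together with \eqref{eqn3WM} makes \eqref{maineqn3WM} hold identically. The main obstacle I expect is the curvature bookkeeping for the Walker metric — getting \eqref{system.3} and \eqref{f_delta} exactly right and, crucially, verifying that $\phi_{tt}\equiv 0$ is equivalent to Ricci-flatness so that Case~1 can legitimately be discarded; once \eqref{system.4} is in hand, the remaining steps are elementary integrations.
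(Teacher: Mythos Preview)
Your proposal is correct and follows the paper's argument essentially step for step: you derive the Hessian components \eqref{system.3} and the Laplacian \eqref{f_delta}, reduce the Yamabe equation to the PDE system \eqref{system.4}, integrate $f_{tt}=f_{tx}=0$ to $f=t\,b(y)+c(x,y)$, obtain \eqref{wm_m1} and differentiate it in $t$ to force the dichotomy $\phi_{tt}=0$ or $b\equiv 0$, discard the first case as Ricci-flat, and in the second case integrate to $f=\kappa x+\eta(y)$ and solve $2\eta''+\phi_t\eta'=0$ for $\phi$. The only point you flag beyond what the paper records is the explicit justification that $\phi_{tt}=0$ renders the metric Ricci-flat, which the paper simply asserts; otherwise the two arguments coincide.
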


Note that by Equation \eqref{f_delta}, we can get: $\Delta(f)=0.$ In order to have a non-trivial potential function we may assume that $M$ is
non-compact due to Hopf's Lemma. Thus, we have:
\begin{corollary}
Let $(M,g)$ be a 3-dimensional Lorenztian Walker manifold equipped with metric $\eqref{Walker metric}$. Then the gradient Yamabe soliton
structure on $(M,g)$ characterized by the conditions (1) and (2) of Theorem $\ref{thm:Walker}$  is non-compact.
\end{corollary}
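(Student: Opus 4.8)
The plan is to combine Theorem \ref{thm:Walker} with the Laplacian computation already available in the excerpt and then invoke Hopf's Lemma to rule out compactness. First I would observe that by Theorem \ref{thm:Walker}, any gradient Yamabe soliton structure on a 3-dimensional Walker manifold has potential function of the form $f(x,y)=\kappa x+\eta(y)$ and warping-type function $\phi(t,x,y)=-2t\ln[\eta^\prime(y)]+\zeta(x,y)$. In particular $f$ does not depend on $t$, so $f_t=0$ and $f_{tt}=0$, and substituting these into Equation \eqref{f_delta} gives $\Delta(f)=f_{xx}+2f_{ty}=0$ since $f_{xx}=(\kappa x+\eta(y))_{xx}=0$ and $f_{ty}=0$. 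Hence $f$ is a harmonic function on $(M,g)$.

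Next I would argue by contradiction: suppose $(M,g)$ were compact. The potential function $f$ is smooth on the compact manifold $M$, hence attains its maximum at some point $p\in M$. Since $\Delta(f)=0$, the (weak) maximum principle — Hopf's Lemma for the Laplace–Beltrami operator — forces $f$ to be constant on $M$. But a constant potential function means the soliton is trivial, i.e. ${\rm Hess}(f)=0$ and $\tau=\lambda$, which contradicts the assumption that we are dealing with a non-trivial gradient Yamabe soliton structure (the whole point of Theorem \ref{thm:Walker}, Case 2, was to produce a non-constant $f$, e.g. with $\kappa\neq 0$). Therefore $M$ cannot be compact.

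The one genuine subtlety — and the step I would flag as the main obstacle — is that Hopf's Lemma in its standard form is stated for \emph{Riemannian} Laplacians, whereas the Walker metric \eqref{Walker metric} is Lorentzian and degenerate-looking in these null coordinates. I would address this by noting that the relevant operator $\Delta$ in Equation \eqref{f_delta} is still a second-order linear operator whose principal symbol, read off from \eqref{f_delta}, is $-\phi\,\partial_t^2+2\partial_t\partial_y+\partial_x^2$; but for the specific solution at hand $f$ depends only on $x$ and $y$ with $f_{xx}=0$, so the restriction of the PDE to the relevant variables, or alternatively the observation that $f=\kappa x+\eta(y)$ together with $\Delta f=0$ pins down $\eta$, reduces matters to an ODE argument combined with the topological fact that a compact manifold admits no nonconstant coordinate-like function of the form $\kappa x$. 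A cleaner phrasing: on a compact $M$ the harmonic function $f$ (with respect to the genuine Laplace–Beltrami operator, which in the Lorentzian setting is the d'Alembertian) must still be constant by the argument that $\int_M g(\nabla f,\nabla f)\,dV_g = -\int_M f\,\Delta f\,dV_g=0$ — though here one must be careful that $g(\nabla f,\nabla f)$ need not be sign-definite in the Lorentzian case, so the integration-by-parts shortcut fails and one really does fall back on the maximum-principle form of Hopf's Lemma applied to the explicit operator. I would therefore present the proof in the maximum-principle form, taking as given (as the corollary's statement already signals) that Hopf's Lemma applies to $\Delta$ as written in \eqref{f_delta}, and conclude that a non-trivial $f$ forces non-compactness of $M$.
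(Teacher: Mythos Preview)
Your proposal is correct and follows exactly the paper's own argument: the paper simply notes that Equation~\eqref{f_delta} gives $\Delta(f)=0$ for the potential function of Theorem~\ref{thm:Walker}, and then invokes Hopf's Lemma to conclude that a non-trivial $f$ forces $M$ to be non-compact. Your flagged subtlety about applying Hopf's Lemma (a Riemannian maximum principle) to a Lorentzian Laplace--Beltrami operator is a legitimate concern that the paper itself does not address; both your write-up and the paper take this application as given.
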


\subsection{Gradient Yamabe Soliton on 4-dimensional Walker Manifolds}

Before finishing the work, let's finally construct the gradient Yamabe soliton structure on the 4-dimensional Walker manifold.

A 4-dimensional Walker manifold is a triple $(M,g,D)$ consisting of  an indefinite metric $g$ and an 2-dimensional parallel null plane $D$ and
in this case $g$ has neutral signature $(-,-,+,+)$ and in suitable coordinates $(x,y,z,t)$ such that with respect to the local frame fields $\{
\partial_x, \partial_y,\partial_z, \partial_t \}$  it can be given by \cite{Walker1}
\begin{equation}
\label{4-Walker metric}
g=2{\rm d}x{\rm d}z +2{\rm d}y{\rm d}t + a(x,y,z,t) {\rm d}z^2 + 2c(x,y,z,t){\rm d}z{\rm d}t +b(x,y,z,t) {\rm d}t^2,
\end{equation}
for some functions $ a(x,y,z,t), b(x,y,z,t), c(x,y,z,t)$ and $D=<\partial_x, \partial_y>$.
The case where $c(x, y, z, t) =0$ was studied  and locally conformally flatness of this special metric was investigated in \cite{Walker3}.

Here, we consider the restricted case of Walker manifold where $a=c=0$ and $b$ described as a function of only $t$ so the metric
$\eqref{4-Walker metric}$ reduces to the form
\begin{equation}
\label{4-Walker metric-b}
g=2{\rm d}x{\rm d}z +2{\rm d}y{\rm d}t +b(t) {\rm d}t^2,
\end{equation}

Now, we will investigate conditions on this particular class of manifolds
to have gradient Yamabe solitons, that is,
\begin{equation} \label{maineqn4WM}{\rm Hess}(f)_{ij} = (\tau-\lambda)g_{ij}
\end{equation}
where $f$ is a potential function.

Note that this equation implies that
$$\frac{\Delta(f)}{4}=\tau-\lambda.$$

By using the metric $\eqref{4-Walker metric-b}$ and straightforward computations,
we have:
\begin{equation}
\label{system.5}
 \left\{ \begin{array}{ll}
{\rm Hess}(f)_{xx} = f_{xx}, \ \
{\rm Hess}(f)_{xy} = f_{xy}, \\
{\rm Hess}(f)_{xz} = f_{xz}, \ \
{\rm Hess}(f)_{xt} = f_{xt},\\
{\rm Hess}(f)_{yy} = f_{yy}, \ \
{\rm Hess}(f)_{yz} = f_{yz}, \\
{\rm Hess}(f)_{yt} = f_{yt}, \ \
{\rm Hess}(f)_{zz} = f_{zz}, \\
{\rm Hess}(f)_{zt} = f_{zt}, \ \
{\rm Hess}(f)_{tt} = f_{tt} -\frac{1}{2}b_tf_y.
\end{array} \right.
\end{equation}

Moreover,
\begin{equation} \label{f_delta-4}
\Delta(f)=2f_{xz}-bf_{yy}+2f_{yt}.
\end{equation}

By combining these, we get:

\begin{equation} \label{eqn4WM}
\tau-\lambda= \frac{1}{4} \Bigl( 2f_{xz}-bf_{yy}+2f_{yt} \Bigl)
\end{equation}

By applying equations \eqref{maineqn4WM}, $\eqref{system.5}$ and \eqref{eqn4WM}, we obtain the following system of PDEs:
\begin{equation}
\label{system.6}
 \left\{ \begin{array}{ll}
f_{xx}=f_{xy}=f_{yy}=f_{yz}=f_{zz}=0, \\
f_{xt}=f_{zt}=0,\\
f_{xz}=f_{yt}=\frac{\Delta (f)}{4}, \\
f_{tt}-\frac{1}{2}b_tf_y=b\frac{\Delta (f)}{4}.
\end{array} \right.
\end{equation}

Notice first that $f_{xy}=0$, $f_{xx}=0$, $f_{yz}=0$ and $f_{yy}=0$  imply that $f(x,y,z,t)=x \beta (z,t) + y A(t) +B(z,t)$ for some functions
$\beta, A $ and $B.$
Thus from $f_{xz}=f_{yt}$, we have $\beta_z(z,t)=A'(t)$.  Since $f_{zz}=0$ and $f_{xt}=0$, $A'(t)=C'(z)$ which yields
\begin{equation}
A(t)=c_0 t+c_1, \ \ C(z)=c_0 z+c_2, \ \ \textrm{where} \ c_0,c_1,c_2 \in \mathbb{R}.
\end{equation}
Also, from  $f_{xt}=0$ and $f_{zt}=0$, we obtain $\beta (y,z)=\kappa z +\mu$ and  $B(z,t)=c_3z+E(t)$, respectively where $c_0 \in \mathbb{R}$.
As a  result, the potential function is given by
\begin{equation}
f(x,y,z,t)=x (c_0 z +c_2) + y (c_0 z +c_1) +c_3z+E(t).
\end{equation}
Substituting this in the last equation of the system $\eqref{system.6}$, we obtain

\begin{equation}
\label{wm_m2}
2E''(t)-b_t(c_0 t+c_1) =2c_0 b.
\end{equation}
Integrating \eqref{wm_m2} with respect to
$t$ and using integration by parts for the second term,    we get
$2E'(t)-b(c_0 t+c_1)=c_0 \int_{t_0}^t b(\bar t){\rm d}\bar t $.

 Hence we obtain the main result of this section:

\begin{theorem} \label{thm:4Walker}
Let $(M,g)$ be a 4-dimensional  Walker manifold
equipped with metric:
\begin{equation}
\label{4-Walker metric-b(t)}
g=2{\rm d}x{\rm d}z +2{\rm d}y{\rm d}t +b(t) {\rm d}t^2.
\end{equation}
Then $(M,g)$ is a gradient Yamabe soliton if and only if its potential function is defined by
\begin{equation*}
f(x,y,z,t)=x (c_0 z +c_2) + y (c_0 z +c_1) +c_3z+E(t)
\end{equation*}
where $c_i \in \mathbb{R}$, ($i=0,1,2,3$) and the metric function $b(t) $ and $E(t) $ are related by
\begin{equation*}
2E'(t)-b(c_0 t+c_1)=c_0 \int_{t_0}^t b(\bar t){\rm d}\bar t.
\end{equation*}
Moreover, $\Delta (f)=4c_0$.
\end{theorem}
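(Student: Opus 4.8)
The plan is to repeat, in dimension four, the scheme already applied to the $3$-dimensional Walker metric: extract the Hessian of an arbitrary potential directly from the metric, convert the soliton equation \eqref{maineqn4WM} into an overdetermined PDE system, and then integrate that system carefully.

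First I would compute the Levi--Civita connection of the reduced metric \eqref{4-Walker metric-b}. Because among the metric coefficients only $g_{tt}=b(t)$ is non-constant and it depends on $t$ alone, the connection has essentially a single non-vanishing component, which gives immediately the Hessian formulas \eqref{system.5}: every component of ${\rm Hess}(f)$ coincides with the corresponding ordinary second partial derivative, except ${\rm Hess}(f)_{tt}=f_{tt}-\frac{1}{2}b_t f_y$. Contracting with $g^{ij}$ yields the Laplacian \eqref{f_delta-4}, and tracing the soliton equation ${\rm Hess}(f)=(\tau-\lambda)g$ forces $\tau-\lambda=\Delta(f)/4$, i.e.\ \eqref{eqn4WM}. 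Substituting this back into the individual component equations, together with the fact that most $g_{ij}$ vanish, produces precisely the system \eqref{system.6}.

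Next I would solve \eqref{system.6}. The vanishing second-order conditions $f_{xx}=f_{xy}=f_{yy}=f_{yz}=f_{zz}=0$ show $f$ is affine in $x,y,z$, so $f=x\beta(z,t)+yA(t)+B(z,t)$. Imposing $f_{xz}=f_{yt}$ gives $\beta_z=A'$, while $f_{xt}=0$ kills the $t$-dependence of $\beta$, and $f_{zz}=0$, $f_{zt}=0$ force $B=c_3 z+E(t)$; then $\beta_z(z)=A'(t)$, being an equality of a function of $z$ and a function of $t$, collapses to a single real constant $c_0$, which pins down $A$ and $\beta$ up to $c_0,c_1,c_2,c_3$ and yields the displayed form of $f$. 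Plugging this $f$ into the last equation of \eqref{system.6} gives the ODE \eqref{wm_m2}; integrating once in $t$, with an integration by parts on $\int b_t(c_0t+c_1)\,{\rm d}t=b(c_0t+c_1)-c_0\int b\,{\rm d}t$, delivers exactly the relation between $E$ and $b$ in the statement. The converse is then a direct substitution: a function $f$ of the stated form, with $(b,E)$ satisfying the integral relation, solves \eqref{maineqn4WM} by \eqref{system.5}. Finally $\Delta(f)=4c_0$ follows at once from \eqref{f_delta-4}, since $f_{xz}=f_{yt}=c_0$ and $f_{yy}=0$.

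I expect the only real obstacle to be the bookkeeping in the middle step rather than any conceptual difficulty: the ``constants'' produced when integrating each equation of \eqref{system.6} are actually functions of the variables not being integrated, so the remaining equations must be invoked in the right order to whittle them down, and the crucial moment is the separation-of-variables argument turning $\beta_z(z,t)=A'(t)$ into the single constant $c_0$ that governs the whole potential. The connection and curvature computations themselves are mechanical because only one metric coefficient varies.
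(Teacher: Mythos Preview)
Your proposal is correct and follows essentially the same route as the paper: compute the Hessian components \eqref{system.5} and Laplacian \eqref{f_delta-4}, reduce the soliton equation to the PDE system \eqref{system.6}, integrate the vanishing second-order conditions to reach the ansatz $f=x\beta(z,t)+yA(t)+B(z,t)$, use $f_{xt}=f_{zt}=0$ and the separation-of-variables step $\beta_z=A'$ to extract the constant $c_0$, and finally integrate the $tt$-equation by parts. Your write-up is in fact slightly more explicit than the paper's in flagging the converse direction and the order in which the equations must be used.
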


If in additionally, $M$ is compact, then by  applying the Divergence Theorem, $c_0=0$ and so get conclude that:

\begin{corollary}
Let $(M,g)$ be a 4-dimensional  Walker manifold equipped with metric \eqref{4-Walker metric-b(t)}.
Then $(M,g)$ is a compact gradient Yamabe soliton if and only if its potential function is defined by
\begin{equation*}
f(x,y,z,t)=c_2x + c_1y +c_3z+\frac{c_1}{2} \int_{t_0}^t b(\bar t){\rm d}\bar t.
\end{equation*}
where $c_i \in \mathbb{R}$, ($i=1,2,3$).
\end{corollary}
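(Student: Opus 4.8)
The plan is to obtain the corollary directly from Theorem \ref{thm:4Walker} by an integration argument. By that theorem, a metric of the form \eqref{4-Walker metric-b(t)} admits a gradient Yamabe soliton structure exactly when the potential function is $f(x,y,z,t)=x(c_0 z+c_2)+y(c_0 t+c_1)+c_3 z+E(t)$ with $2E'(t)-b(c_0 t+c_1)=c_0\int_{t_0}^{t}b(\bar t)\,{\rm d}\bar t$, and in that situation $\Delta(f)=4c_0$. Now add the hypothesis that $M$ is compact (closed). Although $g$ has neutral signature, it still determines a canonical volume density $\sqrt{|\det g|}$, and since $\Delta(f)={\rm div}(\nabla f)$, Stokes' theorem applied to the form $\iota_{\nabla f}\,{\rm d}V_g$ gives $\int_M\Delta(f)\,{\rm d}V_g=0$ regardless of the signature of $g$. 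Integrating the identity $\Delta(f)=4c_0$ over $M$ therefore yields $4c_0\,{\rm Vol}(M)=0$, so $c_0=0$.

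With $c_0=0$ the potential collapses to $f(x,y,z,t)=c_2 x+c_1 y+c_3 z+E(t)$, and the compatibility relation of Theorem \ref{thm:4Walker} reduces to $2E'(t)=c_1 b(t)$, i.e.\ $E(t)=\frac{c_1}{2}\int_{t_0}^{t}b(\bar t)\,{\rm d}\bar t$ up to an additive constant; a constant shift of $f$ does not affect ${\rm Hess}(f)$, so we may discard it and arrive at the stated form of $f$. This settles the ``only if'' direction. Conversely, if $M$ is compact and $f$ has this form, then $f$ together with the relation $2E'(t)=c_1 b(t)$ are precisely the $c_0=0$ instances of the data appearing in Theorem \ref{thm:4Walker}, so the converse half of that theorem shows $(M,g)$ is a gradient Yamabe soliton; combined with compactness this gives the asserted equivalence.

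The whole argument is essentially a specialization of Theorem \ref{thm:4Walker}, so I do not expect a genuine obstacle; the only point deserving a word of care is the vanishing $\int_M\Delta(f)\,{\rm d}V_g=0$. This is not the Riemannian divergence theorem but its pseudo-Riemannian analogue, which is still valid on a closed manifold because it reduces to Stokes' theorem and uses no positivity of $g$. One should also observe that $\Delta(f)=4c_0$ is computed in a Walker coordinate chart, but since the soliton potential is globally smooth and $c_0$ is a chart-independent constant, the identity holds on all of $M$ and the integral is genuinely $4c_0\,{\rm Vol}(M)$.
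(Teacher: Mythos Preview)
Your argument is correct and follows the same route as the paper: invoke Theorem~\ref{thm:4Walker}, use the divergence theorem on the compact manifold to force $c_0=0$ from $\Delta(f)=4c_0$, and then specialize the relation $2E'(t)-b(c_0 t+c_1)=c_0\int_{t_0}^{t}b(\bar t)\,{\rm d}\bar t$ accordingly. Your treatment is in fact more careful than the paper's one-line justification, since you explicitly address why the divergence/Stokes argument remains valid in neutral signature and why the local identity $\Delta(f)=4c_0$ globalizes.
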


\end{document}